\documentclass[10pt, a4paper,
oneside, headinclude,footinclude]{scrartcl}
\usepackage[utf8]{inputenc}

\usepackage[
nochapters, 
pdfspacing, 
dottedtoc 
]{classicthesis} 
\usepackage{amsopn}
\usepackage[T1]{fontenc} 
\usepackage{multirow}
\usepackage[utf8]{inputenc} 
\usepackage{hhline}
\usepackage{graphicx} 
\graphicspath{{Figures/}} 
\usepackage{indentfirst}
\usepackage{enumitem} 
\usepackage{savesym}
\usepackage{mathrsfs}
\usepackage{geometry}
\usepackage{esint}
\usepackage{longtable}
\usepackage{float}
\usepackage{adjustbox}

\geometry{hmargin={1.6cm,1.6cm},vmargin={2cm,2cm},includehead,includefoot}

\usepackage[
    backend=biber,
    style=numeric,
    natbib=false,
    url=true, 
    doi=false,
    eprint=true,
    maxnames=50
]{biblatex}

\usepackage{stmaryrd}

\usepackage{subfig} 

\usepackage{amsmath, amssymb,amsthm,amsfonts} 



\usepackage[english,capitalize]{cleveref}

\usepackage{thmtools}
\usepackage{thm-restate}

\theoremstyle{plain}
\newtheorem{teorema}{Theorem}[section]
\newtheorem{proposizione}[teorema]{Proposition}
\newtheorem{lemma}[teorema]{Lemma}
\newtheorem{corollario}[teorema]{Corollary}
\newtheorem*{theorem*}{Theorem}

\theoremstyle{definition}
\newtheorem{definizione}{Definition}[section]

\theoremstyle{remark}
\newtheorem{osservazione}{Remark}[section]

\newcommand{\Mass}{\mathbb{M}}
\newcommand{\R}{\mathbb{R}}
\newcommand{\N}{\mathbb{N}}

\newcommand{\Leb}{\mathscr{L}}

\newcommand{\Lip}{\mathrm{Lip}}

\newcommand{\supp}{\mathrm{supp}}

\newcommand{\Flat}{\mathbb{F}}
\newcommand{\dV}{d_V\kern-1pt}
\newcommand{\dW}{d_W\kern-1pt}



\newcommand{\trait}[3]{\vrule width #1ex height #2ex depth #3ex}
\newcommand{\trace}{\mathchoice%
  {\mathbin{\trait{.12}{1.2}{.03}\trait{.8}{0.09}{0.03}}}
  {\mathbin{\trait{.12}{1.2}{.03}\trait{.8}{0.09}{0.03}}}
  {\mathbin{\hskip.15ex\trait{.09}{.84}{0.02}\trait{.56}{.07}{.02}}\hskip.15ex}
  {\mathbin{\trait{.07}{.6}{.01}\trait{.4}{.06}{.01}}}}



\newcounter{const}

\newcounter{eps}

\newcommand{\vertiii}[1]{{\left\vert\kern-0.25ex\left\vert\kern-0.25ex\left\vert #1 
    \right\vert\kern-0.25ex\right\vert\kern-0.25ex\right\vert}}



\hypersetup{
colorlinks=true, breaklinks=true,bookmarksnumbered,
urlcolor=webbrown, linkcolor=RoyalBlue, citecolor=webgreen, 
pdftitle={}, 
pdfauthor={\textcopyright}, 
pdfsubject={}, 
pdfkeywords={}, 
pdfcreator={pdfLaTeX}, 
pdfproducer={LaTeX with hyperref and ClassicThesis} 
} 

\addbibresource{ref.bib}

\title{\normalfont\spacedlowsmallcaps{A simple proof of the $1$-dimensional\\ flat chain conjecture}} 
\author{\spacedlowsmallcaps{Andrea Marchese\textsuperscript{*}, Andrea Merlo\textsuperscript{**}}}

\date{}

\begin{document}

\renewcommand{\sectionmark}[1]{\markright{\spacedlowsmallcaps{#1}}} 
\lehead{\mbox{\llap{\small\thepage\kern1em\color{halfgray} \vline}\color{halfgray}\hspace{0.5em}\rightmark\hfil}} 
\pagestyle{scrheadings}
\maketitle 
\setcounter{tocdepth}{2}

{\let\thefootnote\relax\footnotetext{* \textit{Università di Trento, Via Sommarive 14, 38123 Trento,  Italy.}}}
{\let\thefootnote\relax\footnotetext{** \textit{Universidad del Pa\'is Vasco (UPV/EHU), Barrio Sarriena S/N 48940 Leioa, Spain.}}}

{\rightskip 1 cm
\leftskip 1 cm
\parindent 0 pt
\footnotesize

	%
{\textsc Abstract.}
We give a new, elementary proof of the fact that metric 1-currents in the Euclidean space correspond to Federer-Fleming flat chains.
 
\par
\medskip\noindent
{\textsc Keywords:} metric currents, flat chains, normal currents.
\par
\medskip\noindent
{\textsc MSC (2010):} 49Q15, 49Q20.

\par
}


%
%

\section*{Acknoweldgments}
A. Ma. is partially supported by the PRIN project 2022PJ9EFL "Geometric Measure Theory: Structure of Singular Measures, Regularity Theory and Applications in the Calculus of Variations" and by GNAMPA-INdAM.
A. Me. is supported by the European Union’s Horizon Europe research and innovation programme under the Marie Sk\l odowska-Curie grant agreement no 101065346.

\section{Introduction}

In this note we give a new, elementary proof of the following result, see \S \ref{s2} for the relevant notation.

\begin{teorema} Let $T$ be a metric $1$-current in $\R^d$ and denote by $\tilde{T}$ the classical current induced by $T$. Then, $\tilde{T}$ is a flat chain.     
\end{teorema}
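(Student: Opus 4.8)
The plan is to verify a sufficient criterion for flatness of Federer--Fleming type: a finite-mass $1$-current in $\R^d$ is a flat chain as soon as it can be written as $R+\partial S$ with $R$ a $1$-current and $S$ a $2$-current that are both representable by integration against $\Leb^d$, i.e.\ have $L^1$ densities (such $R$ and $\partial S$ lie in the flat-norm closure of normal currents). Since $T$ has finite mass, $\tilde T$ is representable as $\tilde T=\vec\tau\,\mu$ for a finite Radon measure $\mu=\|\tilde T\|$ and a unit $1$-vectorfield $\vec\tau$; the whole difficulty is that $\mu$ may be singular with respect to $\Leb^d$ and that $\partial\tilde T$ may have infinite mass, so finite mass alone is far from sufficient (a unit vectorfield carried by a purely unrectifiable set of finite $\Haus^1$-measure has finite mass but is not a flat chain). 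The proof must therefore exploit the metric-current axioms of continuity and locality, which such pathological currents fail.

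First I would produce a candidate decomposition by mollification. Fixing a smooth mollifier $\rho_\eps$ and using the affine homotopy formula one writes
\[
\tilde T-\tilde T*\rho_\eps=\partial(H_\eps\tilde T)+H_\eps(\partial\tilde T),
\]
where $H_\eps$ is the convolution--homotopy operator, so that $\tilde T=R_\eps+\partial S_\eps$ with $S_\eps=H_\eps\tilde T$ and $R_\eps=\tilde T*\rho_\eps+H_\eps(\partial\tilde T)$. The terms $\tilde T*\rho_\eps$ and $S_\eps$ are immediately absolutely continuous with $L^1$-norms bounded by $C\,\Mass(\tilde T)$ uniformly in $\eps$: their densities are honest convolutions of the vector measure $\vec\tau\mu$ against $\rho_\eps$ (respectively against $\rho_\eps$ thickened in the homotopy variable), so Young's inequality gives the bound using only the total mass $\mu(\R^d)$. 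If the remaining term $H_\eps(\partial\tilde T)$ were likewise absolutely continuous with $L^1$-norm controlled by $\Mass(\tilde T)$, the proof would be finished, since $R_\eps$ would be an admissible density and $S_\eps$ an admissible filling.

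The main obstacle is precisely this last term. Writing out $H_\eps(\partial\tilde T)$ and integrating by parts to exhibit its density, one meets an integral over the homotopy scale $s\in(0,1)$ of a kernel of size $s^{-1}$ paired with $\partial\tilde T$; for a generic finite-mass current the resulting $\int_0^1\de s/s$ diverges logarithmically, and this divergence is exactly the analytic footprint of the non-flat examples above. The heart of the argument is to show that for a metric current the divergence does not occur, because the continuity axiom forces the small-scale behaviour of $\partial\tilde T$ to be tame. I would make this quantitative through the slicing theory of metric currents: slicing $T$ by linear maps $\pi\colon\R^d\to\R$ yields, for a.e.\ level, $0$-dimensional slices that are finite measures with $\int_{\R}\Mass\langle T,\pi,t\rangle\,\de t\le\Lip(\pi)\,\Mass(T)$, and this integralgeometric control is what should rule out a purely unrectifiable part and pin down that $\mu$ splits into an $\Leb^d$-absolutely continuous part plus a $1$-rectifiable part.

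Granting that structure, the conclusion is quick: the absolutely continuous part is a genuine $L^1$-density $1$-current and hence a flat chain directly, while the rectifiable part, carried by a countable union of Lipschitz curves, is a mass-convergent superposition of pushforwards of one-dimensional $L^1$ currents and is therefore flat as well. Equivalently one may bypass the explicit decomposition and invoke a Smirnov--Paolini--Stepanov superposition of $T$ into elementary curve currents $\llbracket\gamma\rrbracket$, each of which is normal, hence flat; since the superposition converges in mass it converges in the flat norm, so $\tilde T$ is flat. I expect the genuinely delicate point to be establishing the absence of a purely unrectifiable part --- equivalently, the small-scale estimate that defeats the $\de s/s$ divergence --- purely from the continuity and locality axioms, which is exactly where the metric structure, as opposed to mere finiteness of mass, is indispensable.
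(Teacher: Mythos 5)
Your mollification set-up correctly identifies where the difficulty sits (the term $H_\eps(\partial\tilde T)$, i.e.\ the fact that $\partial\tilde T$ may have infinite mass), but the step you propose to resolve it rests on a false structural claim. A metric $1$-current in $\R^d$ does \emph{not}, in general, split into an $\Leb^d$-absolutely continuous part plus a $1$-rectifiable part. Take $\nu$ the Cantor measure on $[0,1]$ and consider the current $T$ in $\R^2$ acting by $T(f,\pi)=\int f\,\partial_2\pi\,\de(\nu\times(\Leb^1\trace[0,1]))$: its boundary is $f\mapsto\int(f(x,1)-f(x,0))\,\de\nu(x)$, of mass at most $2$, so $T$ is normal, hence a metric current (and a flat chain); yet its mass measure $\nu\times\Leb^1$ is singular with respect to $\Leb^2$ and vanishes on every set of finite $\Haus^1$-measure (any set $E$ with $\Leb^1(E_x)>0$ for uncountably many vertical slices $E_x$ has $\Haus^1(E)=\infty$), so it carries no $1$-rectifiable part at all. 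Thus the structure you plan to ``grant'' is strictly stronger than the theorem you are proving, and it is false; no slicing estimate can deliver it, since the example above has perfectly well-behaved slices. The alternative route through a Smirnov--Paolini--Stepanov superposition has the same problem one level up: those decompositions require the current to be \emph{normal}, i.e.\ $\Mass(\partial\tilde T)<\infty$, which is precisely what you cannot assume. Finally, the step you yourself flag as ``genuinely delicate'' --- ruling out the $\de s/s$ divergence using only the continuity and locality axioms --- is left entirely unproven, and it is the whole content of the theorem.

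For contrast, the paper's proof involves no rectifiability or slicing at all. It isolates the ``purely non-flat'' part $\tilde T^s$ of $\tilde T$ (the maximal portion containing no flat chain), proves that for such currents the flat norm equals the mass and also equals the \emph{closed} flat norm $\Flat_0$ (testing only against closed forms), and then uses the one fact special to dimension $1$: a closed $1$-form of comass $\le 1$ is $d\pi$ for a $1$-Lipschitz function $\pi$, so the metric continuity axiom forces $\Flat(\tilde T^s-(\tau_v)_\sharp\tilde T^s)\to 0$ as $v\to 0$. Since $\mu_{\tilde T^s}$ is singular with respect to $\Leb^d$, an arbitrarily small translation makes it mutually singular with itself, whence $\Flat(\tilde T^s-(\tau_v)_\sharp\tilde T^s)=\Mass(\tilde T^s-(\tau_v)_\sharp\tilde T^s)=2\Mass(\tilde T^s)$, a contradiction. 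The metric axioms enter only through this translation-continuity in the flat norm, which is a much weaker (and provable) consequence of continuity than the rectifiable-plus-absolutely-continuous structure your argument would require.
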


The fact that Ambrosio-Kirchheim metric $1$-currents correspond to Federer-Fleming flat chains was proved by Schioppa in \cite{Schioppa}, by D. Bate, the first named author and G. Alberti in \cite{AlbMar2}, and by De Masi and the first named author in \cite{DeMaMa}. The analogue result for metric $d$-currents was proved in \cite{DPR}. All these proofs use, directly or indirectly, the notion of \emph{Alberti representations} of a measure $\mu$ and, correspondingly, the construction of \emph{width functions}, that is, Lipschitz functions of small supremum norm with high derivative $\mu$-a.e. along certain directions. Our proof, instead, is immediate, it does not rely on any such notion or construction, and it sheds new light on the PDE challenges behind the possibility to prove the conjecture in full generality.

\section{Notation and preliminary results}
\label{s2}
\subsection{Classical currents in $\R^d$}
A \emph{$k$-dimensional current} $T$ in $\R^d$ is a continuous linear functional on the 
space of smooth and 
compactly supported differential $k$-forms on $\R^d$, endowed with the topology of test functions. 
The \emph{boundary} of $T$,
$\partial T$, is the $(k-1)$-current defined  via $\langle\partial T, \omega \rangle := \langle T, d\omega\rangle $
for every smooth and compactly supported $(k-1)$-form $\omega$.
The \emph{mass} of $T$, denoted by 
$\Mass(T)$, is the supremum of $\langle T, \omega\rangle$ over
all $k$-forms $\omega$ such that $\|\omega\|\le 1$, where $\|\omega\|$ denotes the comass norm.  A current $T$ is called \emph{normal} if both $T$ 
and $\partial T$ have finite mass. 

By the Radon--Nikod\'{y}m theorem, a $k$-dimensional current $T$ with finite mass can be written in an essentially unique way in the form
$T=\tau_T\mu_T$ where $\mu_T$ is a finite positive measure
and $\tau_T$ is a $k$-vector field with unit mass norm $\mu_T$-a.e.
In particular, the action of $T$ on a smooth and compactly supported $k$-form
$\omega$ is given by
\[
\langle T, \omega\rangle 
= \int_{\R^d} \langle\omega(x), \tau_T(x)\rangle d\mu_T(x)
\; .
\]

On the space of smooth and compactly supported differential $k$-forms, we consider the flat norm, see \cite[\S 4.1.12]{Federer1996GeometricTheory}, 
    $$\mathbb{F}(\phi):=\max\{\|\omega\|,\|d\omega\|\}.$$ 
This induces a corresponding flat seminorm on currents defined by
$$\mathbb{F}(T)=\sup\{T(\omega):\omega\in X, \mathbb{F}(\omega)\leq 1\}.$$
We recall that, by \cite[\S 4.1.12]{Federer1996GeometricTheory}, we have 
$$\mathbb{F}(T)=\min\{\mathbb{M}(R)+\mathbb{M}(S):T=R+\partial S\}.$$
More information on currents can be found in \cite{Federer1996GeometricTheory}.

\subsection{Metric currents}\label{ss:metriccurr}

Let $(X,d)$ be a complete metric space. We denote $\mathscr{D}^k(X):=\Lip_b(X,\R)\times\Lip(X,\R)^k$, where $\Lip(X,\R)$ is the space of Lipschitz functions on $X$ and $\Lip_b(X,\R)$ is the subspace of bounded Lipschitz functions . 
\begin{definizione}[Metric currents]\label{d:metric}
A multilinear functional $T:\mathscr{D}^k(X)\to\R$ is said to be a $k$-dimensional \emph{metric current} if 
\begin{itemize}
    \item [(i)] \emph{continuity}: for every $f\in \Lip_b(X,\R)$, $(\pi_1^n)_{n\in\mathbb{N}},\dots,(\pi_k^n)_{n\in\mathbb{N}}\subset \Lip(X,\R)$ converging pointwise to $\pi_1,\dots,\pi_k$ with $\Lip(\pi_i^n)\leq C$ for every $n$
    $$T(f,\pi_1^n,\dots,\pi_k^n)\to T(f,\pi_1,\dots,\pi_k);$$
    \item [(ii)] \emph{locality}: if there exists $i\in\{1,\dots,k\}$ such that $\pi_i\equiv c$ on a neighbourhood of $\supp f$ then $T(f,\pi_1,\dots,\pi_k)=0$;
    \item [(iii)] \emph{finite mass}: there exists a finite Radon measure $\mu$ such that
\begin{equation}\label{e:mass}
|T(f,\pi_1,\dots,\pi_k)|\leq \Lip(\pi_1)\cdots\Lip(\pi_k)\int_X|f|d\mu.
\end{equation}
The minimal measure with such property is denoted by $\mu_T$.

\end{itemize}
More information on metric currents can be found in \cite{metriccurrents} and \cite{lang}. 
\end{definizione}

Let's shift our attention to the case $X = \R^d$, equipped with the Euclidean distance. It's worth recalling that for every $k$-dimensional metric current $T$ on $\R^d$ with compact support, there exists a corresponding "classical" $k$-dimensional current $\tilde T$, see \cite[Theorem 11.1]{metriccurrents}. Denoting $\Lambda(k,d)$ the set of multi-indices $\alpha=(1\leq\alpha_1<\dots<\alpha_k\leq d)$ of length $k$ in $\R^d$, the condition defining $\tilde T$ is that for every smooth and compactly supported $k$-form $$\omega=\sum_{\alpha\in\Lambda(k,d)}\omega_\alpha dx_{\alpha_1}\wedge\dots\wedge dx_{\alpha_k}$$ 
it holds
\begin{equation}\label{e:deftilda}
    \langle \tilde T,\omega\rangle=\sum_{\alpha\in\Lambda(k,d)}T(\omega_\alpha, x_{\alpha_1},\dots, x_{\alpha_k}).
\end{equation}

Conversely, for every flat chain $T$ with finite mass and compact support, there exists a corresponding metric current $\hat T$. These mappings are inverses of each other when restricted to normal currents, see \cite[Theorem 5.5]{lang}.

\section{Properties of purely non-flat currents}

In \cite{AlbMar2}, it has been noticed that normal currents are closely related to flat chains with finite mass, proving that, in codimension at least 1, every flat chain with finite mass is the restriction of a normal current to a Borel set. For this reason many geometric properties valid for normal currents can be inferred also for flat chains with finite mass.

This note originates from the observation of some interesting properties enjoyed by those currents which do not contain any portion of a flat chain with finite mass, for which we give the following definition.

\begin{definizione}[Purely non-flat current]
We say that a current $T$ with finite mass is \emph{purely non-flat} if for every decomposition $T=T_1+T_2$ with $\Mass(T)=\Mass(T_1)+\Mass(T_2)$ where $T_1$ is a flat chain, than $T_1=0$.
\end{definizione}

\begin{osservazione}
    Let us observe that a $k$-current of finite mass $T=\tau_T\mu_T$ is purely non-flat if and only if a certain pointwise relation between the measure and the $k$-vector field holds, that is, if and only if $\tau_T(x)\not\in V_k(\mu_T,x)$ for $\mu$-almost every $x$, see \cite[Definition 4.1, Theorem 1.2]{AlbMar}.
\end{osservazione}


We define the \emph{closed flat norm} of a current $T$ as
$$\Flat_0(T):=\sup\{\langle T,\omega\rangle:\|\omega\|\leq 1, d\omega=0\}.$$
In general this is not equivalent to the flat norm, indeed this quantity equals zero whenever $\partial T=0$. However, we will show that the two quantities are equal for purely non-flat currents with finite mass.

\begin{proposizione}\label{p:flatmass}
Assume that $T$ is a purely non-flat current and $\Mass(T)<\infty$. Then $\Flat(T)=\Mass(T)$. 
\end{proposizione}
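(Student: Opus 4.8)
The inequality $\Flat(T)\le\Mass(T)$ is immediate from the identity $\Flat(T)=\min\{\Mass(R)+\Mass(S):T=R+\partial S\}$, taking $R=T$ and $S=0$, so the whole content is the reverse bound $\Flat(T)\ge\Mass(T)$. I would fix a minimizer $T=R+\partial S$, so that $\Flat(T)=\Mass(R)+\Mass(S)$, and first record a structural observation about it: since $T$ and $R$ both have finite mass, so does $\partial S=T-R$; combined with $\Mass(S)<\infty$ this forces $S$ to be normal and $F:=\partial S$ to be a \emph{finite-mass} flat chain (in fact a normal current with zero boundary). Thus every competitor realizes $T$ as $R$ plus a finite-mass flat chain $F$, and to conclude it is enough to show $\Mass(R)\ge\Mass(T)$, since then $\Mass(R)+\Mass(S)\ge\Mass(T)$.

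Writing $R=T-F$, the required bound says precisely that subtracting a finite-mass flat chain from $T$ cannot lower its mass. The plan is to establish this through the pointwise description recalled in the Remark: $T=\tau_T\mu_T$ with $\tau_T(x)\notin V_k(\mu_T,x)$ for $\mu_T$-a.e.\ $x$, while, by the same characterization from \cite{AlbMar}, the finite-mass flat chain is $F=\tau_F\mu_F$ with $\tau_F(x)\in V_k(\mu_F,x)$. If $\Mass(T-F)<\Mass(T)$, then there must be genuine cancellation between $T$ and $F$ on a set $E$ of positive $\mu_T$-measure; I would show that such cancellation forces $\tau_T(x)$ to acquire a component inside the bundle $V_k(\mu_T,x)$ on $E$, directly contradicting the Remark's characterization of purely non-flat currents. (Equivalently, restricting to $E$ peels off a nonzero flat summand in a mass-additive splitting of $T$, which is forbidden by definition.)

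The main obstacle — and the only non-formal step — is converting the \emph{integral} inequality $\Mass(T-F)<\Mass(T)$ into this \emph{pointwise}, set-localized conclusion. The bundle $V_k(\mu_T,\cdot)$ is attached to $\mu_T$, whereas $F$ is weighted by its own measure $\mu_F$, so the two fields must be compared through a common reference measure: the part of $F$ singular to $\mu_T$ cannot contribute to any cancellation, so I would localize to the region where $\mu_F\ll\mu_T$ and there exploit the characterization of $V_k$ together with the additivity of the maximal flat part supplied by the structural results of \cite{AlbMar,AlbMar2}. As the introduction anticipates, this comparison between the two measures through the decomposability bundle is genuinely PDE-flavoured, and it is where I expect the real difficulty of the argument to lie.
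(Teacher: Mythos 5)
Your reduction is fine up to a point: taking the minimizing decomposition $T=R+\partial S$, noting that $S$ is normal so $F:=\partial S$ is a finite-mass flat chain, and observing that it suffices to prove $\Mass(R)\ge\Mass(T)$ — all of this matches the paper's setup. The gap is the lemma you then set out to prove, namely that subtracting a finite-mass flat chain from a purely non-flat current cannot lower its mass. That statement is false, and no refinement of the ``integral to pointwise'' step can rescue it. Counterexample: in $\R^2$ let $L=\{0\}\times[0,1]$, $\mu=\Haus^1\trace L$, and $T=\tfrac{1}{\sqrt2}(e_1+e_2)\,\mu$. Here $V_1(\mu,x)=\Span(e_2)$, and since $\tfrac{1}{\sqrt2}(e_1+e_2)\notin\Span(e_2)$, the current $T$ is purely non-flat by the pointwise characterization recalled in the Remark. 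Now take $F=\tfrac{1}{\sqrt2}e_2\,\mu$, a rectifiable current and hence a flat chain with finite mass. Then $T-F=\tfrac{1}{\sqrt2}e_1\,\mu$ has mass $\tfrac{1}{\sqrt2}<1=\Mass(T)$. The conceptual error is the conflation of ``$\tau_T(x)\notin V_k(\mu_T,x)$'' with ``$\tau_T(x)$ has no component along $V_k(\mu_T,x)$'': pure non-flatness only forbids membership in the subspace, not a nonzero projection onto it, so genuine cancellation against a flat chain is possible. Note also that such cancellation does \emph{not} produce a mass-additive splitting of $T$ with a flat summand (in the example $\Mass(T-F)+\Mass(F)=\sqrt2\neq 1=\Mass(T)$), so your parenthetical ``equivalently, restricting to $E$ peels off a nonzero flat summand'' is not a contradiction with the definition either.

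What the argument cannot dispense with is exactly what your reduction discards: the minimality of $\Mass(R)+\Mass(S)$, i.e.\ the cost of presenting the flat piece as a boundary. In the counterexample, the portion of $F$ capable of cancelling $T$ (a multiple of the segment current $[L]$) is not a boundary; to realize it inside some $\partial S$ one must close it up, and the closing pieces plus $\Mass(S)$ cost more than the mass saved — this is precisely why $\Flat(T)=\Mass(T)$ still holds. The paper's proof keeps this structure: for the \emph{minimizing} decomposition it argues that $\partial S$ can only be pure cancellation against $R$, that is $\partial S=-R\trace A$ for some Borel set $A$ (no overlap with $T$ at all), and then minimality forces $S=0$, hence $R=T$ and $\Flat(T)=\Mass(T)$. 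In your proposal, after the sentence ``it is enough to show $\Mass(R)\ge\Mass(T)$,'' the quantity $\Mass(S)$ and the fact that $F$ is a boundary never reappear; any proof in which they do not reappear is refuted by the example above, so the strategy needs to be rebuilt around the minimizer rather than around an arbitrary flat chain $F$.
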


\begin{proof}
     Since $\Mass(T)$ is finite then $\Flat(T)<\infty$. To prove that $\Flat(T)=\Mass(T)$, write $T=R+\partial S$ with $\Mass(R)+\Mass(S)=\Flat(T)$, see \cite[\S 4.1.12]{Federer1996GeometricTheory}. Since $S$ is normal, the fact that $T$ is purely non-flat implies that $\partial S=-R\trace A$ for some Borel set $A$. The minimality of $\Mass(R)+\Mass(S)$ then implies $S=0$, hence $R=T$, so that $\Flat(T)=\Mass(R)=\Mass(T)$.
\end{proof}

\begin{proposizione}\label{p:newflat}
Assume that $T$ is a purely non-flat current and $\Mass(T)<\infty$. Then $\Flat(T)=\Flat_0(T)$.    
\end{proposizione}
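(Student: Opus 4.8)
The plan is to deduce the nontrivial inequality from Proposition \ref{p:flatmass} together with a Hahn--Banach duality. First I would dispose of the easy direction: if $d\omega=0$ and $\|\omega\|\le 1$ then $\Flat(\omega)=\max\{\|\omega\|,\|d\omega\|\}=\|\omega\|\le 1$, so the forms admissible in the definition of $\Flat_0(T)$ form a subset of those admissible for $\Flat(T)$, whence $\Flat_0(T)\le\Flat(T)$. By Proposition \ref{p:flatmass} we have $\Flat(T)=\Mass(T)$, so the whole statement reduces to proving the reverse inequality $\Flat_0(T)\ge\Mass(T)$.

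For this I would read $\Flat_0(T)$ as the dual norm of the restriction of $T$ to the subspace $Z$ of closed, smooth, compactly supported $k$-forms, equipped with the comass norm: by definition this dual norm is exactly $\Flat_0(T)$. Hahn--Banach then provides a norm-preserving extension, namely a finite-mass current $T'$ with $\Mass(T')=\Flat_0(T)$ agreeing with $T$ on $Z$ (here one uses that the dual of the comass norm is the mass norm, so the extended functional, being comass-bounded, is a current of finite mass). Setting $B:=T-T'$, the current $B$ has finite mass and vanishes on every closed form; testing $B$ against exact forms $d\eta$, which are themselves closed, gives $\partial B=0$. Thus $B$ is a finite-mass cycle, hence a normal current and in particular a flat chain with finite mass, and $\Flat_0(T)=\Mass(T')=\Mass(T-B)$. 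It remains to show $\Mass(T-B)\ge\Mass(T)$.

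This last estimate is where purely non-flatness enters, and the argument runs parallel to the proof of Proposition \ref{p:flatmass}. Writing $R:=T-B$, so that $T=R+B$ with $B$ a flat chain of finite mass and $T$ purely non-flat, the pointwise characterization of purely non-flat currents (see the Remark and \cite{AlbMar}) forces the flat part of $R$ to cancel $B$ exactly, that is $B=-R\trace A$ for some Borel set $A$. Then $T=R-R\trace A=R\trace(\R^d\setminus A)$, and since mass is additive over the Borel partition $\{A,\R^d\setminus A\}$ we obtain $\Mass(T-B)=\Mass(R)=\Mass(R\trace A)+\Mass(R\trace(\R^d\setminus A))\ge\Mass(R\trace(\R^d\setminus A))=\Mass(T)$. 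Applying this to the current $B$ produced by the Hahn--Banach extension yields $\Flat_0(T)=\Mass(T-B)\ge\Mass(T)=\Flat(T)$, which together with the first paragraph completes the proof.

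The routine points are the identification of the dual comass norm with the mass norm and the fact that a finite-mass current killing all closed forms is a cycle. The genuinely delicate step—the only non-elementary ingredient—is the structural identity $B=-R\trace A$, which encodes the principle that a purely non-flat current cannot have its mass lowered by subtracting a flat chain, and which rests on the measure-theoretic description of purely non-flatness from \cite{AlbMar}. A secondary technical point to verify is that the Hahn--Banach extension $T'$, hence $B$, may indeed be treated as a finite-mass current to which that structural result applies; this is ensured by $\Mass(T')=\Flat_0(T)<\infty$, with the support normalization of $T'$ being harmless since the characterization of purely non-flatness is a statement about the pair $(\tau_R,\mu_R)$ alone.
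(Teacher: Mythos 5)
Your setup is sound and in places actually streamlines the paper's argument: extending the restriction of $T$ to closed forms by Hahn--Banach with respect to the \emph{comass} norm produces directly a finite-mass current $T'$ with $\Mass(T')=\Flat_0(T)$, and $B:=T-T'$ is then a finite-mass cycle, hence normal, hence a flat chain. This is a genuine simplification of the paper's route, which extends with respect to the \emph{flat} norm and must then pass through Federer's decomposition $W=R+\partial S$ of the extension. Up to that point your proof is correct. It collapses, however, at the step you yourself single out as the key one: the claim that purely non-flatness of $T$ forces $B=-R\trace A$, i.e.\ the principle that ``a purely non-flat current cannot have its mass lowered by subtracting a flat chain''. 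That principle is false. Purely non-flatness --- both by definition and through the pointwise characterization $\tau_T(x)\notin V_k(\mu_T,x)$ of \cite{AlbMar} --- constrains only \emph{mass-additive} decompositions $T=T_1+T_2$ with $\Mass(T)=\Mass(T_1)+\Mass(T_2)$, that is, flat pieces of restriction type; it says nothing about flat chains subtracted ``transversally''. Concretely, in $\R^2$ let $\mu:=\Haus^1\trace\Sigma$, where $\Sigma$ is the segment from $0$ to $e_1+e_2$, and let $T:=e_2\,\mu$. Then $T$ is purely non-flat, since $e_2\notin V_1(\mu,\cdot)=\Span(e_1+e_2)$. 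Take $B:=\tfrac12(e_1+e_2)\mu$, a rectifiable (hence flat) chain with finite mass, and $R:=T-B=\tfrac12(e_2-e_1)\mu$. Then $T=R+B$ with $B$ flat, yet $\Mass(R)=\tfrac{\sqrt2}{2}\Mass(T)<\Mass(T)$, and $B$ is not of the form $-R\trace A$ for any Borel set $A$, since its density $\tfrac12(e_1+e_2)$ never coincides with $-\tfrac12(e_2-e_1)\mathbb{1}_A$. So the structural identity you rely on does not follow from purely non-flatness plus the characterization in \cite{AlbMar}.

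What protects the corresponding steps in the paper, and what your argument discards, is the extra structure available there: in \cref{p:flatmass} the flat chain being cancelled is $\partial S$ for a \emph{mass-minimizing} pair $(R,S)$ with $S$ normal, and the minimality is invoked; in the paper's proof of \cref{p:newflat} the dichotomy is applied after splitting $R=R_p+R_f$ mass-additively, to the identity $T-R_p=\partial N+R_f$, whose right-hand side is a boundary plus a restriction-type flat part. In your situation $B$ is a cycle --- note that in the counterexample above $B$ has nonzero boundary, so it could never arise as $T-T'$ --- and $T'$ has minimal mass among all currents agreeing with $T$ on closed forms; but your justification of $B=-R\trace A$ uses neither fact. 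Exploiting them is not a technicality to be checked at the end: it is exactly where the content of the proposition lies, and without it the final inequality $\Mass(T-B)\ge\Mass(T)$ remains unproven.
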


\begin{proof}
    The fact that $\Flat_0\leq \Flat$ is true for every current. Towards a proof by contradiction that $\Flat_0(T)=\Flat(T)$ for a purely non-flat current $T$ with finite mass, assume that $$f_0:=\Flat_0(T)<\Flat(T)=:f_1.$$
    This implies that $T$, seen as a linear functional on the space $Y$ of smooth and compactly supported forms $\omega$ with $d\omega=0$, endowed with the flat norm (which in $Y$ coincides with the comass norm), has operator norm equal to $f_0$.

    By Hahn-Banach theorem, $T$ can be extended to a linear functional $W$ on the space $X$ of smooth and compactly supported forms, endowed with the flat norm, which coincides with $T$ on $Y$. This implies that $\partial T=\partial W$ and moreover $\Flat(W)=f_0$. We can write $W=R+\partial S$, with $\Mass(R)+\Mass(S)=f_0$. In particular $\partial R- \partial T=0$, that is $T-R=\partial N$ for some normal current $N$. We write $R=R_p+R_f$ with $R_p$ purely non-flat and $R_f$ a flat chain with $\Mass(R_p)+\Mass(R_f)=\Mass(R)$. This can be done by maximizing the mass of the restriction of $R$ to a Borel set $A$ such that $R\trace \mathbb{1}_A$ is a flat chain. We deduce that $T-R_p=\partial N+R_f$. Being the right hand-side a flat chain while and the left hand side purely non-flat, the only possibility is that $T-R_f=0$, which is impossible because, by \cref{p:flatmass}, $\Mass(T)=f_1>f_0\geq\Mass(R)$.
\end{proof}

\section{Equivalence between metric 1-currents and flat chains in the Euclidean space}

We note that \cref{p:flatmass} and  \cref{p:newflat} yield an elementary proof of the fact that 1-dimensional metric currents in the Euclidean space correspond to Federer-Fleming flat chain. We begin with the following Lemma.


\begin{lemma}\label{l:reform}
For every smooth $1$-form $\omega$ such that $d\omega=0$ and $\lVert \omega\rVert\leq 1$ there exists $\pi\in C^\infty(\R^d)$ such that 
$$d\pi=\omega\qquad\text{and}\qquad \mathrm{Lip}(\pi)\leq 1.$$
\end{lemma}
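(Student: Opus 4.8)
The plan is to combine the Poincar\'e lemma with the elementary fact that, for a $1$-form, the comass norm is nothing but the Euclidean norm of the coefficient vector. I would write $\omega=\sum_{i=1}^d\omega_i\,dx_i$ with $\omega_i\in C^\infty(\R^d)$. Since for a $1$-form one has $\|\omega(x)\|=\sup_{|v|\le 1}\langle\omega(x),v\rangle=|(\omega_1(x),\dots,\omega_d(x))|$, the hypothesis $\|\omega\|\le 1$ is equivalent to the pointwise bound $|(\omega_1(x),\dots,\omega_d(x))|\le 1$ for every $x\in\R^d$.

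First I would integrate $\omega$ to a potential. Because $d\omega=0$ and $\R^d$ is contractible, the Poincar\'e lemma yields a smooth $\pi$ with $d\pi=\omega$; concretely one may take $\pi(x)=\int_0^1\langle\omega(tx),x\rangle\,dt$. By construction $\partial_i\pi=\omega_i$ for each $i$, so that $\nabla\pi(x)=(\omega_1(x),\dots,\omega_d(x))$, and hence $|\nabla\pi(x)|\le 1$ for every $x$ by the previous observation.

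Finally I would upgrade this pointwise gradient bound to a global Lipschitz estimate by integrating along segments: for all $x,y\in\R^d$,
$$\pi(y)-\pi(x)=\int_0^1\langle\nabla\pi(x+t(y-x)),\,y-x\rangle\,dt,$$
whence $|\pi(y)-\pi(x)|\le|y-x|$ and therefore $\Lip(\pi)\le 1$, as required.

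There is no serious obstacle here: the statement is essentially the Poincar\'e lemma made quantitative. The only two points requiring (minimal) care are the identification of the comass of a $1$-form with the Euclidean norm of its coefficients, and the passage from the pointwise bound $|\nabla\pi|\le 1$ to the Lipschitz bound along line segments, both of which are standard.
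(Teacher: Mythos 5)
Your proof is correct and follows essentially the same route as the paper's, which simply invokes the Poincar\'e lemma on $\R^d$ together with the identification of the exterior derivative of a function with its differential, leaving the details implicit. You have merely spelled out what the paper treats as immediate: the comass of a $1$-form equals the Euclidean norm of its coefficient vector, and the pointwise bound $|\nabla\pi|\le 1$ integrates along segments to give $\Lip(\pi)\le 1$.
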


\begin{proof}
    This is an immediate consequence of the fact that the exterior derivative coincides with the differential for smooth functions.
\end{proof}


\begin{proposizione}\label{prop:singcurves}
Let $\eta$ be a positive and finite Radon measure on $\R^d$ and let $\mu$ be a Radon measure singular with respect to the Lebesgue measure $\Leb^d$. Then, there exists a set of full measure of vectors $v\in \R^d$ such that $\eta$ and $({\tau_{v}})_\sharp\mu$ are mutually singular, where $\tau_v$ denotes the map $\tau_v(x):=v+x$.
\end{proposizione}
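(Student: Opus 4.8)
The plan is to reduce mutual singularity to a single translation-invariance computation via Tonelli's theorem. Since $\mu$ is singular with respect to $\Leb^d$, I would first fix a Borel set $E\subseteq\R^d$ with $\Leb^d(E)=0$ on which $\mu$ is concentrated, i.e.\ $\mu(\R^d\setminus E)=0$. Pushing forward by the translation $\tau_v$, the measure $(\tau_v)_\sharp\mu$ is then concentrated on the translate $E+v$, which is again a Borel set and, by translation invariance of Lebesgue measure, again $\Leb^d$-null. Consequently, to prove that $\eta$ and $(\tau_v)_\sharp\mu$ are mutually singular it suffices to show that $\eta(E+v)=0$: in that case $\eta$ is concentrated on $\R^d\setminus(E+v)$ while $(\tau_v)_\sharp\mu$ is concentrated on $E+v$, and these two sets are disjoint.

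The heart of the argument is therefore to establish that $\eta(E+v)=0$ for $\Leb^d$-almost every $v$. I would prove this by integrating the nonnegative function $v\mapsto\eta(E+v)$ against $\Leb^d$ and checking that the integral vanishes. Writing $\eta(E+v)=\int_{\R^d}\mathbb{1}_E(x-v)\,d\eta(x)$ and applying Tonelli's theorem, which is legitimate because $\eta$ is finite, $\Leb^d$ is $\sigma$-finite, and $(x,v)\mapsto\mathbb{1}_E(x-v)$ is a nonnegative Borel function, one obtains
\[
\int_{\R^d}\eta(E+v)\,d\Leb^d(v)=\int_{\R^d}\Big(\int_{\R^d}\mathbb{1}_E(x-v)\,d\Leb^d(v)\Big)\,d\eta(x).
\]
For each fixed $x$, the substitution $w=x-v$ together with the translation and reflection invariance of Lebesgue measure shows that the inner integral equals $\Leb^d(E)=0$, so the whole expression is zero. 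Since the integrand $\eta(E+v)$ is nonnegative, it must vanish for $\Leb^d$-a.e.\ $v$.

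Collecting this, for every $v$ outside the resulting $\Leb^d$-null exceptional set one has simultaneously $\eta(E+v)=0$ and $(\tau_v)_\sharp\mu(\R^d\setminus(E+v))=0$, which is exactly the assertion $\eta\perp(\tau_v)_\sharp\mu$; thus the set of good $v$ has full measure. I do not expect a genuine obstacle: the only points needing care are the measurability of $(x,v)\mapsto\mathbb{1}_E(x-v)$ (immediate as the composition of a continuous map with the indicator of a Borel set) and the justification of the interchange of integrals, both routine. It is worth noting that no property of $\eta$ beyond finiteness is used, so in particular the Lebesgue decomposition of $\eta$ is unnecessary: the absolutely continuous part of $\eta$ is automatically singular to every translate of $\mu$, and the argument above handles the singular part with no extra effort.
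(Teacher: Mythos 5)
Your proof is correct, and it runs on the same engine as the paper's: a Tonelli computation combined with the translation invariance of $\Leb^d$, showing that a fixed Lebesgue-null Borel set, once translated, is null for the other measure for a.e.\ translation vector. The difference is which measure's null carrier gets translated, and it is not merely cosmetic. You fix a Lebesgue-null Borel set $E$ carrying $\mu$, observe that $(\tau_v)_\sharp\mu$ is carried by $E+v$, and prove $\eta(E+v)=0$ for $\Leb^d$-a.e.\ $v$; this uses exactly the stated hypotheses ($\mu\perp\Leb^d$, $\eta$ finite). The paper instead fixes a Borel set $A$ with $\Leb^d(A)=0$ and $\eta(A^c)=0$ --- that is, a Lebesgue-null carrier of $\eta$ --- and shows $(\tau_v)_\sharp\mu(A)=0$ for a.e.\ $v\in B(0,1)$. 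As written, the paper's argument therefore assumes $\eta\perp\Leb^d$, which is not among the hypotheses of the proposition: it proves the ``dual'' statement in which the singularity assumption sits on $\eta$ rather than on $\mu$. This is harmless for the application in the paper, where the proposition is invoked with $\eta=\mu=\mu_{\tilde T^s}$, both singular with respect to $\Leb^d$, but your version is the one that literally matches the statement, works for an arbitrary finite $\eta$, and also dispenses with the paper's preliminary reduction to $\mu$ supported in $B(0,1)$. Your closing remark that no Lebesgue decomposition of $\eta$ is needed is accurate, and it pinpoints precisely why the hypothesis on $\mu$ alone suffices.
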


\begin{proof}
    Without loss of generality we can assume that $\mu$ is supported in $B(0,1)$.
   Let $A\subset\R^d$ be a Borel set such that $\Leb^d(A)=0$ and $\eta(A^c)=0$ and observe that by Tonelli's theorem we have
\begin{equation}
    \begin{split}
\int_{B(0,1)}({\tau_{v}})_\sharp\mu(A)d\Leb^d(v)=&\int_{B(0,1)}\int \mathbb{1}_{A}(z+v)d\mu(z)d\Leb^d(v)\\
=&\int\int_{B(0,1)} \mathbb{1}_{A}(z+v)d\Leb^d(v)d\mu(z)=\int \Leb^d(A-z)d\mu(z)=0.
    \end{split}
\end{equation}
    The above computation implies that $({\tau_{v}})_\sharp\mu(A)=0$ for $\Leb^d$-almost every $v\in B(0,1)$, so that for those $v$'s the measures $\eta$ and $({\tau_{v}})_\sharp\mu$ are mutually singular. 
\end{proof}

\begin{proposizione}\label{propfudb}
    Let $T$ be a metric $1$-current such that $\tilde T$ is purely non-flat. Then 
    $$\limsup_{|v|\to 0}\mathbb{F}(\tilde T-({\tau_{v}})_\sharp\tilde T)=0.$$
\end{proposizione}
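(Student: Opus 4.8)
Write $R_v:=\tilde T-(\tau_v)_\sharp\tilde T$. The plan is to reduce the full flat norm $\Flat(R_v)$ to the closed flat norm $\Flat_0(R_v)$ and then to activate the continuity axiom of the metric current $T$. Since $\tilde T$ is purely non-flat, its mass measure $\mu_T$ is singular with respect to $\Leb^d$: the Lebesgue-absolutely-continuous part of a finite mass current is a flat chain, which would violate the pointwise characterization recalled in the Remark. Hence \cref{prop:singcurves} applies and, for $\Leb^d$-a.e.\ $v$, the measures $\mu_T$ and $(\tau_v)_\sharp\mu_T$ are mutually singular. For such $v$ the current $R_v$ is again purely non-flat, because on two disjoint Borel sets it coincides with $\tilde T$ and with $-(\tau_v)_\sharp\tilde T$, and the condition $\tau(x)\notin V_1(\mu,x)$ of the Remark is a pointwise, restriction-stable condition. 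Consequently \cref{p:newflat} yields $\Flat(R_v)=\Flat_0(R_v)$ for a.e.\ $v$, so it suffices to estimate $\Flat_0(R_v)$.

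To compute $\Flat_0(R_v)$, take a smooth compactly supported closed $1$-form $\omega$ with $\lVert\omega\rVert\le1$. By \cref{l:reform} we write $\omega=d\pi$ with $\Lip(\pi)\le1$ and $\pi$ constant off a compact set (so $\pi\in\Lip_b$). Using $\tau_v^*(d\pi)=d(\pi\circ\tau_v)$ together with \eqref{e:deftilda}, which gives $\langle\tilde T,d\pi\rangle=\sum_i T(\partial_i\pi,x_i)=T(1,\pi)$ (the metric-current form of $\langle\partial\tilde T,\pi\rangle=\langle\tilde T,d\pi\rangle$), one obtains
\[
\langle R_v,\omega\rangle=T(1,\pi)-T\big(1,\pi\circ\tau_v\big)=T\big(1,\pi-\pi(\cdot+v)\big).
\]
Therefore $\Flat_0(R_v)\le\sup\{\,|T(1,\pi-\pi(\cdot+v))|:\pi\in C^\infty,\ \Lip(\pi)\le1\,\}$, and the whole statement reduces to showing that this supremum vanishes as $|v|\to0$.

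For each fixed $\pi$ the translates $\pi(\cdot+v)$ converge to $\pi$ pointwise with $\Lip\le1$, so axiom (i) gives $T(1,\pi-\pi(\cdot+v))\to0$; the difficulty, which is the core of the argument, is to make this uniform over the non-compact family $\{\Lip(\pi)\le1\}$. I would argue by contradiction: if the supremum stayed bounded below by some $c>0$, pick $v_n\to0$ and $\pi_n$ with $\Lip(\pi_n)\le1$ and $|T(1,\pi_n-\pi_n(\cdot+v_n))|\ge c$. The difference is insensitive to additive constants, so normalize $\pi_n(x_0)=0$; on a compact $K$ containing a neighbourhood of $\supp\mu_T$ the $\pi_n$ are equibounded and equi-Lipschitz, and Arzelà–Ascoli produces a subsequence converging uniformly on $K$ to some $\pi$ with $\Lip(\pi)\le1$. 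Then $\pi_n\to\pi$ and $\pi_n(\cdot+v_n)\to\pi$ pointwise on the interior of $K$; after a McShane modification outside $K$ (preserving $\Lip\le1$) so that convergence holds on all of $\R^d$, and after replacing the slot $1$ by a compactly supported $\chi\equiv1$ near $\supp\mu_T$ (legitimate since $T(1,\cdot)=T(\chi,\cdot)$ by the mass bound (iii)), axiom (i) gives $T(1,\pi_n)\to T(1,\pi)$ and $T(1,\pi_n(\cdot+v_n))\to T(1,\pi)$, whence $T(1,\pi_n-\pi_n(\cdot+v_n))\to0$, a contradiction. This proves $\Flat_0(R_v)\to0$ for every $v$, which together with $\Flat(R_v)=\Flat_0(R_v)$ for a.e.\ $v$ gives the claim.

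The main obstacle is precisely this passage from the pointwise (weak) convergence supplied by axiom (i) to a uniform bound. A direct estimate of $\langle R_v,\omega\rangle$ by the $L^1(\mu_T)$-oscillation of the coefficients of $\omega$ fails, because $\lVert\omega\rVert\le1$ and $\lVert d\omega\rVert\le1$ do not control $\Lip(\omega_i)$; and the homotopy formula $\tilde T-(\tau_v)_\sharp\tilde T=-\partial Q_v-h_\sharp([0,1]\times\partial\tilde T)$ is useless when $\partial\tilde T$ has infinite mass, which is the generic situation here and the ``PDE challenge'' alluded to in the introduction. Restricting to closed forms via \cref{p:newflat} is exactly what removes the boundary term and leaves an expression involving only $T(1,\cdot)$, which the continuity axiom governs — but only along sequences, which is what makes the Arzelà–Ascoli compactness indispensable.
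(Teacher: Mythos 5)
Your proof is correct and follows essentially the same route as the paper: reduction of the flat norm to the closed flat norm via pure non-flatness (\cref{p:flatmass,p:newflat}), passage to $1$-Lipschitz primitives via \cref{l:reform} and the identity $\langle\tilde T,d\pi\rangle=T(1,\pi)$, and a contradiction obtained by normalizing the primitives, extracting an Arzelà--Ascoli limit, and applying the continuity axiom of metric currents. Your only real deviation is bookkeeping rather than strategy: you justify $\Flat(R_v)=\Flat_0(R_v)$ by invoking \cref{prop:singcurves} so that $R_v$ is purely non-flat for a.e.\ $v$ (a point the paper glosses over, applying \cref{p:newflat} only to $\tilde T$ itself), at the price of obtaining the conclusion only along a full-measure set of $v$'s --- which suffices for the application in \cref{p:continuityflat}, and which upgrades to all $v$ by translation-invariance of $\Flat$ and the triangle inequality $\Flat(R_v)\le\Flat(R_{v'})+\Flat(R_{v-v'})$.
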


\begin{proof}
By Proposition \ref{p:newflat}, in this case $\mathbb{F}(\tilde T)=\Flat_0(\tilde T)$. Towards a proof by contradiction of the proposition, assume that there exists a purely non-flat metric current $T$ for which 
$$\limsup_{\lvert v\rvert\to 0}\mathbb{F}(\tilde T-({\tau_{v}})_\sharp\tilde T)>c>0.$$
This implies that there exists a sequence of 
smooth, closed $1$-forms $(\omega_n)$ with 
$\Flat(\omega_n)\leq 1$ for every $n=1,2,\dots$ and a sequence of vectors $v_n\to 0$ such that 
$$\langle\tilde T-({\tau_{v_n}})_\sharp\tilde T,\omega_n\rangle>c(1-n^{-1})\quad \mbox{ for every $n$.}$$

By \cref{l:reform}, for every $n=1,2,\dots$ we can find $\pi_n\in C^\infty(\R^d)$ such that $\Lip(\pi_n)\leq 1$ and $\omega_n=d\pi_n$.
Possibly subtracting a constant, we can assume that $\pi_n(0)=0$, for every $n$, so that we can find a 1-Lipschtz function $\pi_\infty$ such that, up to non-relabeled subsequences,
    \begin{equation}
 \pi_{n}\to \pi_{\infty}\quad\text{locally uniformly.}
        \label{convdebole*}
    \end{equation}
   We deduce that for every $n$
   \begin{equation*}
          \begin{split}
      c(1-n^{-1})<& \langle\tilde T-{(\tau_{v_n})}_\sharp\tilde T,\omega_n\rangle=\langle\tilde T,\omega_n\rangle-\langle{(\tau_{v_n})}_\sharp\tilde T,\omega_n\rangle\\
     =&\langle\tilde T,\omega_n\rangle-\langle{\tilde T,(\tau_{v_n})}^\sharp\omega_n\rangle=T(1,\pi_n)-T(1,\pi_n\circ\tau_{v_n}).
       \end{split}    
   \end{equation*}
Thanks to the continuity of metric currents, the equi-Lipschitzianity of the $\pi_n$'s, and \eqref{convdebole*}, we reach a contradiction.
\end{proof}

The conclusion of the proof is now a simple consequence of the fact that if $T$ is a metric current such that $\tilde T$ is purely non-flat, then $\mu_{\tilde T}$ is singular with respect to $\Leb^d$.

\begin{teorema}\label{p:continuityflat}
    Let $T$ be a metric $1$-current in $\mathbb{R}^d$. Then $\tilde T$ is a flat chain.
\end{teorema}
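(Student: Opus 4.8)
The goal is to show that the purely non-flat part of $\tilde T$ vanishes. The plan is to use the canonical splitting of $\tilde T$ into a flat chain and a purely non-flat current, and to prove that the purely non-flat summand must be zero because it would have to be simultaneously carried by a measure singular with respect to $\Leb^d$ and continuous under small translations in the flat norm, two incompatible requirements.

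First I would write $\tilde T=\tilde T\trace A+\tilde T\trace A^c=:\tilde T_p+\tilde T_f$, where $A$ is a Borel set chosen, exactly as in the proof of \cref{p:newflat}, so that $\tilde T_f$ is a flat chain, $\tilde T_p$ is purely non-flat, and $\Mass(\tilde T)=\Mass(\tilde T_p)+\Mass(\tilde T_f)$. Since the restriction of a metric current to a Borel set is again a metric current and this operation commutes with the correspondence $S\mapsto\tilde S$ of \eqref{e:deftilda}, the current $\tilde T_p$ is induced by the metric current $T\trace A$. It therefore suffices to prove $\tilde T_p=0$.

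Second, I would check that $\mu_{\tilde T_p}$ is singular with respect to $\Leb^d$, which is the fact announced before the statement. Decomposing $\mu_{\tilde T_p}=\mu_{ac}+\mu_s$ into its absolutely continuous and singular parts and letting $B$ carry $\mu_{ac}$, the restriction $\tilde T_p\trace B$ is a $1$-current whose mass measure is absolutely continuous, hence of the form $\vec F\,\Leb^d$ with $\vec F\in L^1(\R^d;\R^d)$. Mollifying $\vec F$ produces smooth, compactly supported normal currents converging to $\tilde T_p\trace B$ in mass, hence in the flat norm, so that $\tilde T_p\trace B$ is a flat chain. As $\tilde T_p$ is purely non-flat, this forces $\tilde T_p\trace B=0$, i.e. $\mu_{ac}=0$. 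With $\mu_{\tilde T_p}$ now singular, I would conclude by playing \cref{prop:singcurves} against \cref{propfudb}: applying \cref{prop:singcurves} with $\eta=\mu=\mu_{\tilde T_p}$, for a full-measure set of small vectors $v$ the measures $\mu_{\tilde T_p}$ and $(\tau_v)_\sharp\mu_{\tilde T_p}$ are mutually singular, so for such $v$ the currents $\tilde T_p$ and $(\tau_v)_\sharp\tilde T_p$ live on disjoint sets and $\tilde T_p-(\tau_v)_\sharp\tilde T_p$ is purely non-flat of mass $2\Mass(\tilde T_p)$. Then \cref{p:flatmass} gives $\mathbb{F}(\tilde T_p-(\tau_v)_\sharp\tilde T_p)=2\Mass(\tilde T_p)$, whereas \cref{propfudb} applied to the metric current $T\trace A$ gives $\mathbb{F}(\tilde T_p-(\tau_v)_\sharp\tilde T_p)\to 0$ as $v\to 0$. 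Choosing $v$ small and inside the full-measure set yields $2\Mass(\tilde T_p)=0$, hence $\tilde T_p=0$ and $\tilde T=\tilde T_f$ is a flat chain.

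The step I expect to require the most care is the claim that $\tilde T_p-(\tau_v)_\sharp\tilde T_p$ is purely non-flat, that is, that a sum of two purely non-flat currents carried by mutually singular measures is purely non-flat; this is what allows \cref{p:flatmass} to recover the mass from the flat norm. I would prove it by a restriction argument: given any splitting of the sum as a flat chain plus a purely non-flat current with additive masses, restrict to each of the two disjoint carrying sets and use that restrictions of flat chains are flat, restrictions of purely non-flat currents are purely non-flat, and that the masses remain additive, to force the flat part to vanish on each piece and hence altogether. The remaining points, namely that restriction commutes with the metric/classical correspondence and that the translation $\tau_v$ preserves both mass and pure non-flatness, are routine.
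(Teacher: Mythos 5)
Your proposal is correct and follows essentially the same route as the paper's own proof: restrict to a Borel set to isolate a purely non-flat piece induced by a metric current, show its mass measure is singular with respect to $\Leb^d$ (the paper uses Lebesgue density and piecewise-constant normal currents where you mollify, a cosmetic difference), then combine \cref{prop:singcurves}, \cref{p:flatmass} and \cref{propfudb} to force that piece to vanish. Your explicit justification that a sum of purely non-flat currents carried by mutually singular measures is again purely non-flat fills in a step the paper leaves implicit, which is a welcome addition but not a departure in strategy.
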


\begin{proof}
Assume by contradiction that there exists a metric $1$-current $T$ such that $\tilde T$ is not a flat chain, or equivalently that there exists a Borel set $A$ such that $\tilde T\trace \mathbb{1}_A$ is a non-trivial purely non-flat current. We claim that  $T^s:=T\trace \mathbb{1}_A$ is a metric current such that $\widetilde{ T^s}=\tilde T\trace \mathbb{1}_A$. First, $T\trace \mathbb{1}_A$ is a metric current thanks to \cite[Theorem 3.5]{metriccurrents}. Let us prove the claimed identity. 
For every $(f,\pi)\in \mathscr{D}^1(\R^d)$ with $f,\pi\in C^1(\R^d)$ we have
$$\langle \widetilde{T^s},fd\pi\rangle= T^s(f,\pi)= T\trace \mathbb{1}_A(f,\pi)=T(f\mathbb{1}_A,\pi)=\langle\tilde{T},f\mathbb{1}_Ad\pi\rangle=\langle\tilde{T}\trace \mathbb{1}_A,fd\pi\rangle,$$
where the third identity above follows from \cite[Theorem 3.5]{metriccurrents}.
This proves the identity $\widetilde{T^s}=\tilde{T}\trace \mathbb{1}_A$ and hence $\widetilde{T^s}$ is purely non-flat.

We claim that the total variation of $\widetilde{T^s}$ is singular with respect to $\Leb^d$. Indeed, suppose that there exists a Borel set $B$ such that $\mu_{\tilde T^s}\trace B\ll\Leb^d$. In particular, it follows from the Lebesgue density theorem that the current $\tilde T^s\trace \mathbf{1}_B$ is a limit in mass of currents of the form
$$T_N:=\sum_{i=1}^N v_i \Leb^d\trace B_i,$$
where $B_i$ are balls and $v_i$ are constant on each $B_i$. It is easy to check that such $T_N$ is a normal current, which proves the claim.

By \cref{prop:singcurves} we can find arbitrarily small vectors $v\in\R^d$ such that $\mu_{\tilde T^s}$ and $(\tau_v)_\sharp\mu_{\tilde T^s}$ are mutually singular, so that, observing that $\tilde T^s-(\tau_v)_\sharp\tilde T^s$ is purely non-flat, we have $$\Flat(\tilde T^s-(\tau_v)_\sharp\tilde T^s)=\Mass(\tilde T^s-(\tau_v)_\sharp\tilde T^s)=2\Mass(\tilde T^s),$$ where the first equality follows from Proposition \ref{p:flatmass}. This however contradicts Proposition \ref{p:continuityflat}.
\end{proof}

\begin{osservazione}
    We remark that we used that $T$ is 1-dimensional only in \cref{l:reform}. Providing a suitable generalization of this lemma does not seem feasible, see for instance \cite{zbMATH05971405} and  \cite[Theorem 2.1]{zbMATH01156782}. This obstructions is deeply connected to the fact that Schauder estimates for the Laplacian fail for continuous data. However, when we apply \cref{l:reform} in \cref{propfudb}, we do not really need the equality $d\omega=\pi$, but only the equality between the action of $\tilde T$ on $\omega$ and the action of $T$ on $(1,\pi)$ when $\tilde T$ is purely non-flat. Therefore, in principle, the obstruction above does not exclude the possibility to adapt our strategy to prove the flat chain conjecture in full generality.  
\end{osservazione}

\section*{Conflict of interest and data availability}
On behalf of all authors, the corresponding author states that there is no conflict of interest.\\

The manuscript has no associated data.

\printbibliography

\end{document}